\theoremstyle{plain}
\newtheorem{theorem}{Theorem}
\newtheorem{lemma}{Lemma}
\theoremstyle{definition}
\newtheorem{definition}{Definition}
\let\rom\textup
\let\qq\qquad
\let\q\quad
\begin{document}

\title{%
	Solution of the Unconditional Extremal Problem\\
	for a Linear-Fractional Integral Functional\\
	Depending on the Parameter}
\author{
	P.~V.~Shnurkov
	\footnote{National Research University Higher School of Economics, 
		Moscow, 101000 Russia, 
		pshnurkov@hse.ru}, 
	K.~A.~Adamova
	\footnote{National Research University Higher School of Economics, 
		Moscow, 101000 Russia, 
		kaadamova@edu.hse.ru}
	}
\date{}

\maketitle

\markboth{Shnurkov, Adamova}{Extremal Problem for a Linear-Fractional Integral Functional}

\begin{abstract}
 The paper is devoted to the study of the unconditional extremal problem for a fractional linear
integral functional defined on a set of probability distributions.
 In contrast to results proved earlier,
the integrands of the integral expressions
in the numerator and the denominator
in the problem under consideration
depend on a real optimization parameter vector.
 Thus, the optimization problem is studied
on the Cartesian product of a set of probability distributions
and a set of admissible values of a real parameter vector.
 Three statements on the extremum of a fractional linear integral functional are proved.
 It is established that, in all the variants, the solution of the original problem
is completely determined by the extremal properties of the test function
of the linear-fractional integral functional; this function
is the ratio of the integrands of the numerator and the denominator. 
Possible applications of the results obtained
to problems of optimal control of stochastic systems are described.
\end{abstract}

{\bf Key words:}
linear-fractional integral functional,
unconditional extremal problem for a fractional linear integral functional,
test function, optimal control problems for Markov
and semi-\allowbreak Markov random processes.

\section{Introduction}

 The present paper is a continuation of the studies of the unconditional extremal problem
for linear-fractional integral functionals
carried out in the papers~[1],~[2].
 Besides its theoretical value,
this problem serves as a basis for solving optimal control problems
for various classes of random processes
(regenerating, Markov, semi-Markov).
 In turn, optimal control problems for these classes of processes
arise in the analysis of numerous applied models
in the mathematical theory of effectiveness
and reliability, storage theory,
and other areas of applied probability theory.

 Let us make some remarks of bibliographic nature
concerning the general unconditional extremal problem
for linear-fractional integral functionals.

 By linear-fractional programming
we usually mean the area of optimization theory
in which the objective functional of the extremal problem under consideration
is the ratio of two linear functionals
and the available constraints are linear.
 In this area of optimization theory,
there exists an extensive literature
mostly devoted to the study of such problems
in finite-dimensional spaces.

 A comprehensive theory dealing with this direction
was described in the fundamental monograph~[3].
 This book not only presents theoretical results
concerning the solutions of the corresponding extremal problems,
but also describes numerical methods for finding such solutions.
 In addition,
it contains a detailed bibliography
in the field of fractional linear programming.
 Also note certain recent important papers, such as [4], [5], [6],
in which theoretical and numerical problems related to research in this subject
were studied.

 A special area of linear-fractional programming
comprises extremal problems in which the objective functional
is the ratio of two integrals.
 The integrands in these integrals
are assumed to be known
and integration is carried out with respect to a probability measure
belonging to a set of probability measures
defined on a given measure space.
 The solution of the problem is the probability measure
furnishing a global extremum to such a functional.
 Functionals of this form may be called \textit{integral linear-fractional functionals}.
 Extremal problems for integral linear-fractional objective functionals
defined on a set of probability distributions
in a finite-dimensional space were considered by V.~A.~Kashtanov
in~[7],~[8].
 His results on the theory of unconditional extrema
for such functionals were described
in complete form in the monograph~[8, Chap.~10].
 The main significant feature of these results is that
the unconditional extremum of functionals of such form
is attained on degenerate probability distributions
having one point of growth.
 However, this result was obtained in [8]
under highly restrictive conditions, the chief of which
was the assumption on the existence of an extremum of the objective functional,
i.e., the existence of a solution of the original problem.
 In his papers~[1],~[2],
 P.~V.~Shnurkov constructed a new solution of the unconditional extremal problem
for a linear-fractional integral functional,
which significantly generalizes
and strengthens the corresponding results from [8].
 The fundamental difference of the results given in [1],~[2] from
the earlier ones is that the main statement indicates conditions
under which the extremum of a fractional linear integral functional
exists and is attained on a degenerate distribution concentrated
at one point.
 Further, the point at which the whole probability measure
is concentrated is the point of global extremum of a function
for which an explicit analytic representation
was obtained.

 In the papers~[1],~[2],
it was assumed that the integrands of the numerator
and denominator of the fractional linear integral functional under consideration
are independent of the probability measure
characterizing the control.
 This assumption is justified by the fact that, in
many applied problems, the objective exponent
is a stationary cost functional with a prescribed structure.
 However,
in many specific problems,
the objective exponent is expressed as
a linear-fractional integral functional
the integrands of whose numerator and denominator
depend on a collection of deterministic control parameters.
 In this case, the extremal problem is changed,
and a special study is needed.
 Such a study is given in the present paper.

\section{Statement of the Main Extremal Problem}
 Let
$(U, \mathscr{B})$
denote a measure space,
where
$U$ is an arbitrary set and
$\mathscr{B}$
is the
$\sigma$-algebra of subsets of the set~$U$
including all simpletons.

 In what follows, the space~$(U, B)$
will serve as a set of admissible stochastic
optimization or of control parameters
in the extremal problem under consideration.

 Let $\Gamma$
be a set of probability measures defined on the
$\sigma$-algebra~$\mathscr{B}$
whose elements will be denoted by
$\Psi \in \Gamma$.
 In what follows, we shall formulate constraints
on the set~$\Gamma$ and its elements related to the main extremal problem.

\begin{definition}
 A probability measure~$\Psi^*$
defined on~$\mathscr{B}$,
is said to be \textit{degenerate} if
there exists a point
$u^* \in U$
such that
$\Psi^*(\{u^*\})=1$,
$\Psi^*(B^*)=0$,
where
$u^*=\{u^* \}$
is the set consisting of one point
and $B^*$
is an arbitrary set from the system~$\mathscr{B}$
not containing the point~$u^*$.
 The point
$u^*$
is called the \textit{concentration point} of the measure~$\Psi^*$,
and we will denote the measure
with its concentration point by the symbol
$\Psi^*=\Psi^*_{u^*}$.
\end{definition}

 Let
$\Gamma^*$
denote the set of all possible degenerate probability measures
defined on the measure space~$(U, \mathscr{B})$.
 The set $\Gamma^*$ is in a one-to-one correspondence with
with the set of points concentration of degenerate
probability measures~$U$.

 Let
$S\subseteq R^r $
be a set of values of the vector parameter
$\alpha=(\alpha_1, \alpha_2, \ldots , \alpha_r) \in S.$
 In what follows, the parameter~$\alpha$
will serve as a deterministic optimization (control) parameter
in the optimization problem under consideration.

 Let us define
some measurable numerical functions:
$$ 
\text{
$A(\alpha, u):S \times
U \rightarrow R$,\qquad
$B(\alpha, u):S \times
U \rightarrow R$,
}
$$ 
where
$u \in U$,
$\alpha \in S$.

 We introduce the following integral transformations
defined by the functions
$A(\alpha, u)$,
$B(\alpha,u)$:
$$
I_{1,\alpha} (\Psi) = \int\limits_U A(\alpha, u)\,d\Psi(u), \qquad
I_{2,\alpha} (\Psi) = \int\limits_U B(\alpha, u)\,d\Psi(u).
\eqno(1)
$$

 The integral expressions in relations~(1)
are Lebesgue integrals with respect to the probability measure
$\Psi \in \Gamma$.
 According to the general probability model
examined in great detail in [9, Chap.~2],
these integrals have the meaning of expectations of some
random variables
$A(\alpha, u)$,
$B(\alpha, u)$
depending on an elementary outcome
$u \in U $ of a random experiment
on the probability space~$(U,\mathscr{B},\Psi)$.
 If the set~$U$ is a finite-dimensional real space, then the probability measure~$\Psi$
can be given by the distribution function of a one-dimensional or multidimensional random variable.
 In that case, the integrals in relations~(1)
can be expressed as a Lebesgue-Stieltjes integral with respect to the probability distribution~$\Psi$ [9], [10].
 The integral transformations~(1) also define the following functionals
given on the set of probability measures~$\Gamma$,
$$
I_{1,\alpha} (\Psi):\Gamma \rightarrow R,\qq I_{2,\alpha} (\Psi):\Gamma \rightarrow R,
$$
these functionals depend on the parameter
$\alpha \in S$.

 Let us now introduce the notion of a fractional linear integral functional
depending on a parameter.

\begin{definition}
 The mapping
$I_\alpha: \Gamma \rightarrow R$
defined by the relation
$$
I_\alpha (\Psi) = \frac{I_{1,\alpha} (\Psi)}{I_{2,\alpha} (\Psi)}=\frac{\int\limits_U
A(\alpha, u)\,d\Psi(u)}{\int\limits_U B(\alpha, u)\,d\Psi(u)}
\eqno(2)
$$
will be called a \textit{linear-fractional integral functional
depending on a parameter}~$\alpha$.
\end{definition}

 We shall consider the extremal problem
$$
I_\alpha (\Psi) \rightarrow \operatorname{extr}, \qquad \Psi \in \Gamma,\q \alpha \in S,
\eqno(3)
$$
for a linear-fractional integral functional
depending on a numerical parameter~$\alpha$.

\begin{definition}
 The function
$$ 
\text{
$C(\alpha, u)=\frac{A(\alpha, u)}{B(\alpha, u)},\qq
u \in U,\quad \alpha \in S, $
}
$$ 
is called the \textit{test function of the linear-fractional integral functional}~(2).
\end{definition}

 The unconditional extremal problem
for a linear-fractional integral functional of the form~(2) do independent of the parameter
was studied in the author's papers~[1],~[2].
 However, problem~(3) considered
in the present paper cannot be reduced
to the problem examined in those papers.
 Namely, the integrands of the numerator
and denominator of the objective functional~(2)
depend on an additional nonrandom
control parameter $\alpha \in S$.
 In the papers
[1],~[2],
it was assumed that the integrands of the numerator
and denominator of the fractional linear integral functional
are independent of the control.

 We shall introduce preliminary conditions
on the main objects appearing in the description of the extremal problem~(3).
 These conditions ensure the well-posedness
of the problem under consideration.
 They are as follows:
\begin{enumerate}
\item
 The integral expressions
$$
I_{1,\alpha} (\Psi) = \int\limits_U A(\alpha, u)\,d\Psi(u),\qq
I_{2,\alpha} (\Psi) = \int\limits_U B(\alpha, u)\,d\Psi(u)
$$
exist for all
$\Psi \in \Gamma$, $\alpha \in S$.
\item $\int\limits_U B(\alpha, u)\,d\Psi(u) \neq 0$, $\Psi \in \Gamma$, $\alpha \in S$.
\item $\Gamma^* \subset \Gamma$.
\end{enumerate}
\medskip

\noindent\textbf{Remark.}
 If the function
$B(\alpha, u)$
does not change sign,
i.e.,
$B(\alpha, u) > 0, u \in U,\quad \alpha \in S$,
or
$B(\alpha, u) < 0, u \in U,\quad \alpha \in S$,
then Condition~2 from the system of preliminary conditions
holds automatically.
 At the same time, the strict positivity condition for the function
$B(\alpha, u)$
is natural for many optimal control problems for
regenerating and semi-Markov random processes
(see the corresponding remarks~[2]).
 In this connection,
in the following main statement
on the extremum of the fractional linear integral functional,
it will be assumed that conditions~1,~3
and the strict sign-constancy condition for the function~$B(\alpha, u)$
hold.

\begin{theorem}
 Suppose that the main objects in the extremal problem~\rom{(3)}
satisfy the preliminary conditions~\rom{1,~3}
and the function
$B(\alpha, u)$
is strictly of constant sign
\rom(strictly positive or
strictly negative\rom)
for all
$u \in U,\quad \alpha \in S$.
 We also assume that the test function
$C(\alpha, u)$
attains its global extremum
on the whole set
$(\alpha, u) \in S \times
U $
at the point
$(\alpha^*,u^*)$.

 Then the solution of the extremal problem~(3)
exists and is attained on the pair
$(\alpha^*,\Psi^*_{u^*})$,
where
$\Psi^*_{u^*}$
is a degenerate probability measure
concentrated at the point~$u^*$,
and the following relations hold\rom:
$$
\max_{(\alpha, \Psi) \in S \times
\Gamma} I_\alpha(\Psi) = \max_{\alpha \in S}
\max_{\Psi^* \in \Gamma^*} I_\alpha(\Psi^*) = \max_{(\alpha, u) \in S \times
U}
\frac{A(\alpha, u)}{B(\alpha, u)} = \frac{A(\alpha^*, u^*)}{B(\alpha^*, u^*)},
$$
if
$(\alpha^*, u^*)$
is the point of global maximum of the function
$C(\alpha^*, u^*)$\rom;
$$
\min_{(\alpha, \Psi) \in S \times
\Gamma} I_\alpha(\Psi) = \min_{\alpha \in S}
\min_{\Psi^* \in \Gamma^*} I_\alpha(\Psi^*) = \min_{(\alpha, u) \in S \times
U}
\frac{A(\alpha, u)}{B(\alpha, u)} = \frac{A(\alpha^*, u^*)}{B(\alpha^*, u^*)},
$$
if
$(\alpha^*, u^*)$
is the point of global minimum of the function
$C(\alpha^*, u^*)$.
\end{theorem}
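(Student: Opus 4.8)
The plan is to prove the maximization case in full; the minimization case follows by an entirely symmetric argument (or by applying the maximization result to the test function $-C$). The whole argument rests on converting the pointwise extremal property of the test function $C(\alpha,u)$ into an inequality for the functional $I_\alpha(\Psi)$, using the strict constant sign of $B(\alpha,u)$ to track the direction of the inequality when the denominator is cleared.

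First I would record the defining inequality of the global maximum: by hypothesis $C(\alpha,u) \le C(\alpha^*,u^*)$ for every $(\alpha,u) \in S \times U$, that is, $A(\alpha,u)/B(\alpha,u) \le C(\alpha^*,u^*)$. Multiplying through by $B(\alpha,u)$ yields a pointwise estimate between the integrands: if $B(\alpha,u) > 0$ then $A(\alpha,u) \le C(\alpha^*,u^*)\,B(\alpha,u)$, whereas if $B(\alpha,u) < 0$ the inequality reverses to $A(\alpha,u) \ge C(\alpha^*,u^*)\,B(\alpha,u)$. Integrating this estimate against an arbitrary $\Psi \in \Gamma$ for fixed $\alpha$, which is legitimate by preliminary condition~1 and the monotonicity of the Lebesgue integral, gives $I_{1,\alpha}(\Psi) \le C(\alpha^*,u^*)\,I_{2,\alpha}(\Psi)$ in the positive case and the reversed inequality in the negative case. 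I would then use that the strict sign of $B$ forces the same strict sign on $I_{2,\alpha}(\Psi) = \int_U B(\alpha,u)\,d\Psi(u)$, the integral of a strictly positive (resp.\ negative) function against a probability measure, so that dividing by $I_{2,\alpha}(\Psi)$ flips the inequality a second time precisely in the negative case. In both regimes the two sign changes compose to deliver the single clean bound $I_\alpha(\Psi) \le C(\alpha^*,u^*)$, valid for all $(\alpha,\Psi) \in S \times \Gamma$.

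It remains to exhibit a pair attaining this bound and to read off the chain of equalities. By preliminary condition~3 the degenerate measure $\Psi^*_{u^*}$ concentrated at $u^*$ belongs to $\Gamma$, and for any measurable $f$ one has $\int_U f(u)\,d\Psi^*_{u^*}(u) = f(u^*)$; hence $I_{\alpha^*}(\Psi^*_{u^*}) = A(\alpha^*,u^*)/B(\alpha^*,u^*) = C(\alpha^*,u^*)$. Combined with the upper bound, this shows the supremum is attained at $(\alpha^*,\Psi^*_{u^*})$ and equals $C(\alpha^*,u^*)$, which already establishes the outermost and rightmost members of the displayed chain. The two intermediate equalities follow formally: restricting the maximization from $\Gamma$ to $\Gamma^* \subset \Gamma$ cannot increase the maximum, yet it already achieves $C(\alpha^*,u^*)$, so $\max_{(\alpha,\Psi) \in S \times \Gamma} I_\alpha(\Psi) = \max_{\alpha \in S}\max_{\Psi^* \in \Gamma^*} I_\alpha(\Psi^*)$; and since $I_\alpha(\Psi^*_u) = C(\alpha,u)$ for every concentration point $u$, the double maximum over degenerate measures coincides with $\max_{(\alpha,u) \in S \times U} C(\alpha,u)$.

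The one point requiring care is the bookkeeping of the two sign regimes for $B$: one must check that clearing the denominator pointwise and then dividing by $I_{2,\alpha}(\Psi)$ produces the same final inequality direction whether $B$ is positive or negative, and in particular that $I_{2,\alpha}(\Psi)$ inherits the strict sign of $B$ rather than vanishing. This is exactly preliminary condition~2, here guaranteed automatically by the strict sign-constancy of $B$ as noted in the Remark. Everything else is a direct consequence of the monotonicity of the integral and the defining property of a degenerate measure, so I expect this sign bookkeeping to be the only genuine obstacle.
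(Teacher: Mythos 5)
Your proposal is correct and follows essentially the same route as the paper: the pointwise bound $C(\alpha,u)\le C(\alpha^*,u^*)$ is converted into the functional bound $I_\alpha(\Psi)\le C(\alpha^*,u^*)$ by clearing the denominator in the two sign regimes of $B$ (this is exactly the content of the paper's Lemma~1, which you simply inline), and the bound is then shown to be attained at the degenerate measure $\Psi^*_{u^*}$, with the chain of equalities following from $\Gamma^*\subset\Gamma$ and $I_\alpha(\Psi^*_u)=C(\alpha,u)$. No gaps.
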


 As a preliminary,
we shall prove the following lemma.

\begin{lemma}
 If the assumptions of the theorem are satisfied
and the function
$$
C(\alpha, u)=\frac{A(\alpha,u)}{B(\alpha, u)}
$$
is bounded above or below,
i.e., at least one of the following inequalities holds\rom:
\begin{align}
 C(\alpha, u) &\leq C^{(+)}, \qquad (\alpha, u) \in S \times
U,
\tag{4}
\\
 C(\alpha, u) &\geq C^{(-)}, \qquad (\alpha, u) \in S \times
U,
\tag{5}
\end{align}
then the corresponding estimate also holds
for the whole linear-fractional functional\rom:
\begin{align}
 I_\alpha (\Psi) &\leq C^{(+)}, \qquad \Psi \in \Gamma,\q \alpha \in S,
\tag{6}
\\
 I_\alpha (\Psi) &\geq C^{(-)}, \qquad \Psi \in \Gamma,\q \alpha \in S.
\tag{7}
\end{align}
\end{lemma}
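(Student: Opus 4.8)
The plan is to reduce the functional estimate to the pointwise bound on the test function by clearing the denominator $B(\alpha,u)$ and invoking the monotonicity of the Lebesgue integral. I would treat the upper-bound implication (4) $\Rightarrow$ (6) in full detail; the lower-bound implication (5) $\Rightarrow$ (7) then follows by the same argument with the inequalities reversed. Throughout, $\alpha \in S$ and $\Psi \in \Gamma$ are fixed but arbitrary.

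First I would rewrite the pointwise hypothesis. Assuming (4), i.e. $A(\alpha,u)/B(\alpha,u) \leq C^{(+)}$ for every $u \in U$, I multiply through by $B(\alpha,u)$. Here the sign of $B$ matters: if $B(\alpha,u) > 0$ on $U$, the inequality is preserved, giving $A(\alpha,u) \leq C^{(+)} B(\alpha,u)$; if $B(\alpha,u) < 0$ on $U$, it is reversed, giving $A(\alpha,u) \geq C^{(+)} B(\alpha,u)$. Integrating the resulting pointwise inequality against the probability measure $\Psi$ and using that the integral is order-preserving, I obtain correspondingly $I_{1,\alpha}(\Psi) \leq C^{(+)} I_{2,\alpha}(\Psi)$ in the positive case and $I_{1,\alpha}(\Psi) \geq C^{(+)} I_{2,\alpha}(\Psi)$ in the negative case.

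The step that requires care, and which I expect to be the main subtlety, is dividing back by $I_{2,\alpha}(\Psi) = \int_U B(\alpha,u)\,d\Psi(u)$. Since $B$ is strictly of constant sign and $\Psi$ is a probability measure, $I_{2,\alpha}(\Psi)$ inherits that sign; in particular it is nonzero (consistent with Condition~2), and its sign coincides with that of $B$. In the positive case I divide $I_{1,\alpha}(\Psi) \leq C^{(+)} I_{2,\alpha}(\Psi)$ by the positive quantity $I_{2,\alpha}(\Psi)$, preserving the direction, and obtain $I_\alpha(\Psi) \leq C^{(+)}$. In the negative case I divide $I_{1,\alpha}(\Psi) \geq C^{(+)} I_{2,\alpha}(\Psi)$ by the negative quantity $I_{2,\alpha}(\Psi)$, which reverses the direction and again yields $I_\alpha(\Psi) \leq C^{(+)}$. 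The essential observation is that the two sign reversals --- one from clearing the denominator $B$, one from dividing by $I_{2,\alpha}$ of the same sign --- cancel, so that the direction of the bound on $I_\alpha$ matches the direction of the bound on $C$ regardless of which sign $B$ carries. Since $\alpha$ and $\Psi$ were arbitrary, estimate (6) follows; the symmetric argument beginning from (5) gives (7).
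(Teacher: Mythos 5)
Your proposal is correct and follows essentially the same route as the paper's own proof: clear the denominator $B(\alpha,u)$ in the two sign cases, integrate the resulting pointwise inequality, and divide back by $\int_U B(\alpha,u)\,d\Psi(u)$, whose sign matches that of $B$, so the two reversals cancel. No gaps.
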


\begin{proof}[\bf Proof of Lemma~1] 
 Suppose that inequality~(4) holds:
$$
C(\alpha, u)=\frac{A(\alpha, u)}{B(\alpha, u)} \leq C^{(+)} < \infty, \qquad
(\alpha, u) \in S \times
U.
\eqno(8)
$$

 First, consider the variant
in which $B(\alpha, u) > 0$, $(\alpha, u) \in S \times
U$.
 Then, from (8), we obtain
$$
A(\alpha, u) \leq C^{(+)}B(\alpha, u), \qquad (\alpha, u) \in S \times
U.
\eqno(9)
$$

 By the property of the integral~[10],
it follows from (9) that
$$
\int\limits_U A(\alpha, u)\,d\Psi(u) \leq C^{(+)}\int\limits_U B(\alpha, u)\,d\Psi(u),
\qquad \Psi \in \Gamma,\q \alpha \in S.
\eqno(10)
$$

 At the same time,
the strict positivity condition for the function
$B(\alpha, u)$
implies the corresponding inequality for the integral~[10]:
$$
\int\limits_U B(\alpha, u)\,d\Psi(u) > 0, \qquad \Psi \in \Gamma,\q \alpha \in S.
\eqno(11)
$$
 But, in that case, from (10) and (11), we obtain
$$
I_\alpha(\Psi)= \frac{\int\limits_U A(\alpha, u)\,d\Psi(u)}{\int\limits_U B(\alpha,
u)\,d\Psi(u)} \leq C^{(+)}, \qquad \Psi \in \Gamma,\q \alpha \in S.
\eqno(12)
$$

 Now we consider the variant
in which $B(\alpha, u) < 0$, $(\alpha, u) \in S\times U$.

 Then,
using~(8), we can write
$$
A(\alpha, u) \geq C^{(+)}B(\alpha, u), \qquad (\alpha, u) \in S \times
U.
\eqno(13)
$$

 It follows from inequality (13)
and the property of the integral that the following inequality holds:
$$
\int\limits_U A(\alpha, u)\,d\Psi(u) \geq C^{(+)}\int\limits_U B(\alpha, u)\,d\Psi(u),
\qquad \Psi \in \Gamma,\q
\alpha \in S.
\eqno(14)
$$

 At the same time,
the condition of strict negativity of the function~$B(\alpha, u)$
implies the corresponding inequality for the integral:
$$
\int\limits_U B(\alpha, u)\,d\Psi(u) < 0, \qquad \Psi \in \Gamma,\q \alpha \in S.
\eqno(15)
$$

 But, in that case,
from the inequalities~(14) and (15), we obtain
$$
I_\alpha(\Psi)= \frac{\int\limits_U A(\alpha, u)\,d\Psi(u)}{\int\limits_U B(\alpha,
u)\,d\Psi(u)} \leq C^{(+)}, \qquad \Psi \in \Gamma,\q \alpha \in S.
\eqno(16)
$$

 Thus, in both cases in which the function
$B(\alpha, u)$ does not change sign, using condition (8),
we obtain inequality~(6) expressed as~(12) and (16).

 The first assertion of Lemma~1 is proved.

 The second assertion of the lemma (inequality~(7))
is proved in a similar way, .

 The proof of Lemma~1 is complete.
\end{proof}

 Let us pass to the proof of the main assertion of the theorem.

\begin{proof}[ \bf Proof of Theorem~1]
 Suppose that the test function
$C(\alpha, u)$
attains its global maximum
at the point
$(\alpha^*, u^*) \in S \times
U$.
 This assumption implies the estimate
$$
C(\alpha, u) = \frac{A(\alpha, u)}{B(\alpha, u)} \leq \frac{A(\alpha^*,
u^*)}{B(\alpha^*, u^*)}, \qquad (\alpha, u) \in S \times
U.
\eqno(17)
$$
 Then the function
$C(\alpha, u)$
satisfies the assumptions of Lemma~1.
 Applying this lemma,
we obtain the inequality of the form~(6):
$$
I_\alpha(\Psi)= \frac{\int\limits_U A(\alpha, u)\,d\Psi(u)}{\int\limits_U B(\alpha,
u)\,d\Psi(u)} \leq \frac{A(\alpha^*, u^*)}{B(\alpha^*, u^*)}, \qquad \Psi \in \Gamma,\q \alpha
\in S.
\eqno(18)
$$

 Consider the degenerate probability measure
$\Psi^*_{u^*}$,
concentrated at the point~$u^*$.
 By the property of the integral,
for any fixed value of the parameter
$\alpha \in S$,
the following equalities hold:
$$
\int\limits_U A(\alpha, u)\,d\Psi^*_{u^*}(u) = A(\alpha, u^*),\qquad
\int\limits_U B(\alpha, u)\,d\Psi^*_{u^*}(u) = B(\alpha, u^*),
$$
whence
$$
I_\alpha(\Psi^*_{u^*})= \frac{A(\alpha, u^*)}{B(\alpha, u^*)}, \qquad \alpha \in S.
\eqno(19)
$$

 By assumption, the pair
$(\alpha^*, u^*)$
is the point of global maximum of the function
$C(\alpha, u) = {A(\alpha, u)}/{B(\alpha, u)}$.
 Hence,
in view of (19),
it follows that, for
any fixed value of
$\alpha \in S$,
$$
I_\alpha(\Psi^*_{u^*})= \frac{A(\alpha, u^*)}{B(\alpha, u^*)} \leq
\frac{A(\alpha^*, u^*)}{B(\alpha^*, u^*)}, \qquad \alpha \in S.
\eqno(20)
$$

 It follows from inequality (18)
that the set of values of the functional
$I_\alpha(\Psi)$
is bounded above
for all
$\alpha \in S$, $\Psi \in \Gamma$.
 But, in that case,
this set
has
a finite upper bound~[11]
that satisfies the inequality
$$
\sup_{(\alpha \in S, \Psi \in \Gamma)} I_\alpha(\Psi) \leq \frac{A(\alpha^*,
u^*)}{B(\alpha^*, u^*)} = I_{\alpha^*}(\Psi^*).
\eqno(21)
$$

 Since
$\alpha^* \in S$,
in view of the upper bound property~[11],
we can write
$$
I_{\alpha^*}(\Psi^*) \leq \sup_{\alpha \in S} I_\alpha(\Psi)
$$
for any fixed measure
$\Psi^* \in \Gamma^*$.

 In particular,
for
$\Psi^*=\Psi^*_{u^*}$, we have
$$
I_{\alpha^*}(\Psi^*_{u^*}) \leq \sup_{\alpha \in S} I_\alpha(\Psi^*_{u^*}).
\eqno(22)
$$

 At the same time,
$\Psi^*_{u^*} \in \Gamma^*$.
 Then, by the above-mentioned upper bound property,
we have
$$
\sup_{\alpha \in S} I_\alpha(\Psi^*_{u^*}) \leq \sup_{\alpha \in
S}\left[\sup_{\Psi^* \in \Gamma^*}I_\alpha(\Psi^*)\right].
\eqno(23)
$$

 Under the assumption of the theorem,
$\Gamma^* \subset \Gamma$.
 Then
by the upper bound property~[11],
for any fixed
$\alpha \in S$,
the following inequality holds:
$$
\sup_{\Psi^* \in \Gamma^*}I_\alpha(\Psi^*) \leq \sup_{\Psi \in
\Gamma}I_\alpha(\Psi),
$$
whence
$$
\sup_{\alpha \in S}\sup_{\Psi^* \in \Gamma^*}I_\alpha(\Psi^*) \leq \sup_{\alpha
\in S}\sup_{\Psi \in \Gamma}I_\alpha(\Psi) = \sup_{(\alpha \in S, \Psi \in
\Gamma)}I_\alpha(\Psi).
\eqno(24)
$$

 Note
additionally that the relations
$\alpha^* \in S$, $\Psi^*_{u^*} \in \Gamma* \subset \Gamma$
hold.
 Then,
by the definition of the upper bound,
$$
I_\alpha(\Psi^*_{u^*}) = \frac{A(\alpha^*, u^*)}{B(\alpha^*, u^*)} \leq
\sup_{(\alpha \in S, \Psi \in \Gamma)}I_\alpha(\Psi).
\eqno(25)
$$

 Using relation~(21), (22), (23), (24), (25) simultaneously,
we obtain
\begin{align}
\nonumber
I_\alpha(\Psi^*_{u^*}) &\leq
\sup_{\alpha \in S} I_\alpha(\Psi^*_{u^*}) \leq
\sup_{\alpha \in S}\left[\sup_{\Psi^* \in \Gamma^*}I_\alpha(\Psi^*)\right] \leq
\sup_{\alpha \in S}\sup_{\Psi \in \Gamma}I_\alpha(\Psi)
\\
&=\sup_{(\alpha \in S, \Psi \in \Gamma)}I_\alpha(\Psi) \leq
I_\alpha(\Psi^*_{u^*}).
\tag{26}
\end{align}

 It follows from relations~(26)
that the upper bound for the functional
$I_\alpha(\Psi)$
on the set
$(\alpha \in S$, $\Psi \in \Gamma)$
is attained
for
$\alpha=\alpha^*,
\Psi=\Psi^*$,
and the following equality holds:
$$
\sup_{(\alpha \in S, \Psi \in \Gamma)}I_\alpha(\Psi) = I_{\alpha^*}(\Psi^*_{u^*})
= \frac{A(\alpha^*, u^*)}{B(\alpha^*, u^*)}.
$$
 In this case, the upper bound
is the maximum of the functional under study
with respect to the set
$(\alpha \in S$, $\Psi \in \Gamma)$.

 Thus, we have
$$
\max_{(\alpha \in S, \Psi \in \Gamma)}I_\alpha(\Psi) = \max_{(\alpha \in S, \Psi^*
\in \Gamma^*)}I_\alpha(\Psi^*) = I_{\alpha^*}(\Psi^*_{u^*}) = \max_{(\alpha \in S, \Psi
\in \Gamma)} \frac{A(\alpha, u)}{B(\alpha, u)} = \frac{A(\alpha^*, u^*)}{B(\alpha^*,
u^*)}.
$$

 The first assertion of the theorem is proved.

 The second assertion is proved in a similar way.

 The proof of Theorem~1 is complete.
\end{proof}

 Now we shall study the solution of the extremal problem~(3)
for the variants in which the test function of the linear-fractional integral functional~(2)
does not attain its global extremum.

\begin{theorem}
 Suppose that the main objects in the extremal problem~(3)
satisfy conditions~\rom{1,~3} and the function
$B(\alpha, u)$
is strictly of constant sign
\rom(strictly positive or
strictly negative\rom)
 We also assume that the test function
$C(\alpha, u)$
is bounded
\rom(above or below\rom),
but does not attain its global extremum
\rom(maximum or minimum\rom)
on the set
$S \times U$.

 Then the following assertions hold\rom:
\begin{enumerate}
\item
 If the test function
$C(\alpha, u)$
is bounded above and does not attain its global maximum, then,
for any given
$\varepsilon > 0$,
there exists a pair
$\alpha^{(+)}(\varepsilon) \in S, u^{(+)} (\varepsilon) \in U$
such that the following inequality holds\rom:
$$
\sup_{(\alpha, \Psi) \in S \times
\Gamma}I_\alpha(\Psi) - \varepsilon <
I_{\alpha^{(+)}(\varepsilon)}\left(\Psi^*_{u^{(+)}(\varepsilon)}\right) =
\frac{A\left(\alpha^{(+)}(\varepsilon),
u^{(+)}(\varepsilon)\right)}{B\left(\alpha^{(+)}(\varepsilon),
u^{(+)}(\varepsilon)\right)}
< \sup_{(\alpha, \Psi) \in S \times
\Gamma}I_\alpha(\Psi),
\eqno(27)
$$
where
$\Psi^*_{u^{(+)}(\varepsilon)} \in \Gamma$
is a degenerate probability measure concentrated
at the point
$u^{(+)}(\varepsilon)$.

\item
 If the test function
$C(\alpha, u)$
is bounded below
and
does not attain its global minimum, then,
for any given
$\varepsilon > 0$,
there exists a pair
$\alpha^{(-)}(\varepsilon) \in S, u^{(-)} (\varepsilon) \in U$
such that the following inequality holds\rom:
$$
\inf_{(\alpha, \Psi) \in S \times
\Gamma}I_\alpha(\Psi) <
I_{\alpha^{(-)}(\varepsilon)}\left(\Psi^*_{u^{(-)}(\varepsilon)}\right) =
\frac{A\left(\alpha^{(-)}(\varepsilon),
u^{(-)}(\varepsilon)\right)}{B\left(\alpha^{(-)}(\varepsilon),
u^{(-)}(\varepsilon)\right)} <
< \inf_{(\alpha, \Psi) \in S \times
\Gamma}I_\alpha(\Psi) + \varepsilon,
\eqno(28)
$$
where
$\Psi^*_{u^{(-)}(\varepsilon)} \in \Gamma$
is a degenerate probability measure concentrated
at the point
$u^{(-)}(\varepsilon)$.
\end{enumerate}
\end{theorem}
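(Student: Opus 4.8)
The plan is to reduce both assertions to the single identity
$$
\sup_{(\alpha, \Psi) \in S \times \Gamma} I_\alpha(\Psi) = \sup_{(\alpha, u) \in S \times U} C(\alpha, u),
$$
after which each $\varepsilon$-estimate follows from the defining property of the supremum (resp.\ infimum) together with the hypothesis that the extremum of $C$ is \emph{not} attained. I would establish this identity exactly as in the proof of Theorem~1, but stopping short of the attainment step. Concretely, for the first assertion I would set $C^{(+)} = \sup_{(\alpha, u) \in S \times U} C(\alpha, u)$, which is finite by the boundedness hypothesis and serves as the least upper bound in~(4).

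First I would invoke Lemma~1: since $C(\alpha, u) \le C^{(+)}$ on all of $S \times U$, inequality~(6) gives $I_\alpha(\Psi) \le C^{(+)}$ for every $\Psi \in \Gamma$, $\alpha \in S$, whence $\sup_{(\alpha, \Psi) \in S \times \Gamma} I_\alpha(\Psi) \le C^{(+)}$. For the reverse inequality I would pass to degenerate measures: by the computation in~(19), $I_\alpha(\Psi^*_u) = C(\alpha, u)$ for every $u \in U$, $\alpha \in S$. Since condition~3 gives $\Gamma^* \subset \Gamma$, the supremum over $\Gamma$ dominates the supremum over $\Gamma^*$, so
$$
C^{(+)} = \sup_{(\alpha, u) \in S \times U} C(\alpha, u) = \sup_{(\alpha, \Psi^*) \in S \times \Gamma^*} I_\alpha(\Psi^*) \le \sup_{(\alpha, \Psi) \in S \times \Gamma} I_\alpha(\Psi).
$$
Combining the two bounds yields the identity $\sup_{(\alpha, \Psi) \in S \times \Gamma} I_\alpha(\Psi) = C^{(+)}$.

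With this identity in hand, the $\varepsilon$-approximation is immediate. By the defining property of the least upper bound, for every $\varepsilon > 0$ there is a pair $\bigl(\alpha^{(+)}(\varepsilon), u^{(+)}(\varepsilon)\bigr) \in S \times U$ with $C^{(+)} - \varepsilon < C\bigl(\alpha^{(+)}(\varepsilon), u^{(+)}(\varepsilon)\bigr)$. The crucial point is the strict upper bound $C\bigl(\alpha^{(+)}(\varepsilon), u^{(+)}(\varepsilon)\bigr) < C^{(+)}$: this is precisely where the hypothesis that $C$ does not attain its global maximum is used, since otherwise some value could equal $C^{(+)}$. Rewriting these two strict inequalities through $C(\alpha, u) = I_\alpha(\Psi^*_u)$ and $C^{(+)} = \sup_{(\alpha,\Psi)\in S\times\Gamma} I_\alpha(\Psi)$ produces~(27) verbatim. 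The second assertion is proved the same way, replacing suprema by infima, using~(5) and~(7) in place of~(4) and~(6), and reading off the two strict inequalities from the fact that the infimum of $C$ is not attained, which yields~(28).

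The argument presents no genuine obstacle; the only subtlety worth flagging is the role of the non-attainment hypothesis, which is what upgrades an ordinary supremum approximation (giving only the weak lower inequality) into the two-sided strict estimate~(27), and symmetrically~(28). Everything else is a transcription of the reasoning already carried out for Theorem~1.
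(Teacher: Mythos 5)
Your proposal is correct and follows essentially the same route as the paper: the paper packages the identity $\sup_{(\alpha,\Psi)\in S\times\Gamma} I_\alpha(\Psi) = \sup_{(\alpha,u)\in S\times U} C(\alpha,u)$ as its Lemma~2 (proved via Lemma~1 for the upper bound and the degenerate-measure computation together with $\Gamma^*\subset\Gamma$ for the reverse), and then concludes exactly as you do by the approximation property of the supremum, with non-attainment supplying the strict upper inequality in~(27). No substantive difference.
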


 Note that assertions~1 and 2
may hold either
separately
for
the upper and lower bounds
or simultaneously for both bounds.

\begin{proof}[ \bf Proof of Theorem~2.]
 The following auxiliary statement holds.

\begin{lemma}
 Suppose that the assumptions of Theorem~\rom{1} hold\rom;
however, at the same time, the test function
$C(\alpha, u)$
is bounded above or below, but does not attain its global extremum
on the set of values of the arguments
$(\alpha, u) \in S \times
U$.
 Then the linear-fractional integral functional
$I_\alpha(\Psi)$
is also bounded above or below
and the following relations hold\rom:
$$
\sup_{(\alpha, \Psi) \in S \times
\Gamma}I_\alpha(\Psi) = \sup_{\alpha \in S}
\sup_{\Psi^* \in \Gamma^*}I_\alpha(\Psi^*)
= \sup_{(\alpha, u) \in S \times U}
\frac{A(\alpha, u)}{B(\alpha, u)} < \infty
\eqno(29)
$$
if the test function
$C(\alpha, u)$
is bounded above
or
$$
\inf_{(\alpha, \Psi) \in S \times
\Gamma}I_\alpha(\Psi) = \inf_{\alpha \in S}
\inf_{\Psi^* \in \Gamma^*}I_\alpha(\Psi^*)
= \inf_{(\alpha, u) \in S \times U}
\frac{A(\alpha, u)}{B(\alpha, u)} > - \infty
\eqno(30)
$$
if the test function
$C(\alpha, u)$
is bounded below.
\end{lemma}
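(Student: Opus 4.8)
The plan is to reproduce the architecture of the proof of Theorem~1, but with suprema in place of maxima, since the hypothesis now denies attainment of the extremum. I treat the case in which the test function is bounded above, the bounded-below case being symmetric. Write $M := \sup_{(\alpha,\Psi) \in S \times \Gamma} I_\alpha(\Psi)$, $M_1 := \sup_{\alpha \in S}\sup_{\Psi^* \in \Gamma^*} I_\alpha(\Psi^*)$, and $M_2 := \sup_{(\alpha,u) \in S \times U} C(\alpha,u)$; the goal is to establish $M = M_1 = M_2 < \infty$, which is exactly~(29).

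First I would dispose of finiteness. Since $C(\alpha,u)$ is bounded above, there is a constant $C^{(+)}$ with $C(\alpha,u) \le C^{(+)}$ on $S \times U$, so Lemma~1 gives $I_\alpha(\Psi) \le C^{(+)} < \infty$ for all $\Psi \in \Gamma$, $\alpha \in S$; hence $M \le C^{(+)} < \infty$, and $M_2$ is finite as well.

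Next I would evaluate the functional on degenerate measures. Exactly as in the derivation of~(19), the property of the integral yields $I_\alpha(\Psi^*_u) = A(\alpha,u)/B(\alpha,u) = C(\alpha,u)$ for every $u \in U$ and $\alpha \in S$. Because degenerate measures stand in one-to-one correspondence with their concentration points, taking suprema over $\alpha$ and over $\Gamma^*$ gives the identity $M_1 = M_2$ at once. It then remains to squeeze $M$ between $M_1$ and $M_2$. For the lower side, the inclusion $\Gamma^* \subset \Gamma$ (Condition~3) together with monotonicity of the supremum over a larger set gives $M_1 \le M$. For the upper side I would invoke Lemma~1 a second time, now with the choice $C^{(+)} = M_2 = \sup C$: the inequality $C(\alpha,u) \le M_2$ holds trivially for all $(\alpha,u)$, so Lemma~1 returns $I_\alpha(\Psi) \le M_2$ for all $\alpha, \Psi$, whence $M \le M_2$.

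Combining these estimates, $M_2 = M_1 \le M \le M_2$, which forces $M = M_1 = M_2 < \infty$. I expect the main difficulty to be conceptual rather than computational: recognizing that feeding the \emph{least} upper bound $\sup C$ back into Lemma~1 produces precisely the reverse inequality $M \le M_2$ that closes the chain. A consequence worth stressing in the write-up is that the non-attainment hypothesis is never actually used in deriving the three equalities; it serves only to explain why one writes $\sup$ rather than $\max$, distinguishing this lemma from Theorem~1. The bounded-below case~(30) then follows verbatim by replacing each supremum with an infimum, reversing the inequalities, and using the lower bound of Lemma~1.
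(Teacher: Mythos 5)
Your proposal is correct and follows essentially the same route as the paper: the paper likewise applies Lemma~1 with the least upper bound $C^{(+)}_0=\sup_{(\alpha,u)}C(\alpha,u)$ to get $\sup I_\alpha(\Psi)\le C^{(+)}_0$, evaluates $I_\alpha$ on degenerate measures to identify $\sup_{\alpha}\sup_{\Psi^*}I_\alpha(\Psi^*)$ with $\sup C$, and closes the chain via $\Gamma^*\subset\Gamma$. Your side remark that non-attainment is not actually used in deriving the equalities is also consistent with the paper's argument, where that hypothesis only affects whether the bound is written as a strict inequality.
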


 Relations~(29) and (30) given above
may hold either
separately
for
the upper and lower bounds of the values of the functional
$I_\alpha(\Psi)$,
or simultaneously for both bounds.

\begin{proof}[\bf Proof of Lemma~2]
 Suppose that the following condition holds:
$$
C(\alpha, u) = \frac{A(\alpha, u)}{B(\alpha, u)} \leq C^{(+)} < \infty, \qquad
(\alpha, u) \in S \times
U,
$$
but, at the same time, the function
$C(\alpha, u)$
does not attain its maximum value on the set
$S \times U$.
 As is well known if the set
is bounded above,
then it has a finite upper bound~[11]
and the following relation holds:
$$
C(\alpha, u) = \frac{A(\alpha, u)}{B(\alpha, u)} < \sup_{(\alpha, u) \in S
\times
U}\frac{A(\alpha, u)}{B(\alpha, u)} = C^{(+)}_0 \leq C^{(+)}, \qquad (\alpha, u) \in
S \times
U.
\eqno(31)
$$

 Using the assertion of Lemma~1,
from (31)
we obtain
$$
I_\alpha(\Psi) \leq C^{(+)}_0, \qquad \alpha \in S,\q \Psi \in \Gamma,
$$
whence,
by the upper bound property,
$$
\sup_{(\alpha, \Psi) \in S \times
\Gamma}I_\alpha(\Psi) \leq C^{(+)}_0.
\eqno(32)
$$

 At the same time,
by the property of the integral
$$
I_\alpha(\Psi^*_{\hat{u}}) = \frac{\int\limits_U A(\alpha,
u)\,d\Psi^*_{\hat{u}}(u)}{\int\limits_U B(\alpha, u)\,d\Psi^*_{\hat{u}}(u)} = \frac{A(\alpha,
\hat{u})}{B(\alpha, \hat{u})},
\eqno(33)
$$
where
$\hat{u} \in U$
is a fixed point and
$\Psi^*_{\hat{u}}(u)$
is a degenerate probability measure concentrated
at the point
$\hat{u}$.

 It follows from relation~(33) that
$$
\sup_{\alpha \in S}\sup_{\Psi^* \in \Gamma^*}I_\alpha(\Psi^*) = \sup_{\alpha
\in S}\sup_{u \in U}\frac{A(\alpha, u)}{B(\alpha, u)} = \sup_{(\alpha, u) \in S \times
U}\frac{A(\alpha, u)}{B(\alpha, u)} = C^{(+)}_0.
\eqno(34)
$$

 Since
$\Gamma^* \subset \Gamma$,
by the upper bound property
for any
fixed
$\alpha \in S$,
we have
$$
\sup_{\Psi^* \in \Gamma^*}I_\alpha(\Psi^*) \leq \sup_{\Psi \in
\Gamma}I_\alpha(\Psi),
$$
whence
$$
\sup_{\alpha \in S}\sup_{\Psi^* \in \Gamma^*}I_\alpha(\Psi^*) \leq
\sup_{\alpha \in S}\sup_{\Psi \in \Gamma}I_\alpha(\Psi).
\eqno(35)
$$

 Using relations~(32), (34), (35),
we can write
\begin{align*}
\sup_{(\alpha, \Psi) \in S \times
\Gamma}I_\alpha(\Psi) &\leq \sup_{(\alpha, u)
\in S \times
U}\frac{A(\alpha, u)}{B(\alpha, u)} = C^{(+)}_0 = \sup_{\alpha \in
S}\sup_{\Psi^* \in \Gamma^*}I_\alpha(\Psi^*)
\\
&\leq \sup_{\alpha \in S}\sup_{\Psi \in \Gamma}I_\alpha(\Psi) =
\sup_{(\alpha, \Psi) \in S \times
\Gamma}I_\alpha(\Psi)
\end{align*}
whence we directly obtain relation~(29).
 The second assertion of Lemma~2,
i.e., relation~(30) is proved in a similar way.
\end{proof}

 Let us now pass
to the direct proof of Theorem~2.
 We consider the variant
in which the test function
$C(\alpha, u)$
is bounded above, but does not attain its global maximum
on the set
$(\alpha, u) \in S \times U$.
 Then the first assertion of Lemma~2 holds,
i.e., relation~(29).
 Let us fix an arbitrary
$\varepsilon > 0$.
 Then,
by the upper bound property,
there exists an
$\alpha^{(+)}(\varepsilon) \in S$,
$u^{(+)}(\varepsilon) \in U$
such that the following inequality holds:
$$
\sup_{(\alpha, u) \in S \times
U}\frac{A(\alpha, u)}{B(\alpha, u)} - \varepsilon <
\frac{A\left(\alpha^{(+)}(\varepsilon),
u^{(+)}(\varepsilon)\right)}{B\left(\alpha^{(+)}(\varepsilon),
u^{(+)}(\varepsilon)\right)} < \sup_{(\alpha, u) \in S \times
U}\frac{A(\alpha,
u)}{B(\alpha, u)}.
\eqno(36)
$$

 Let
$\Psi^*_{u^{(+)}(\varepsilon)}$
denote a degenerate probability measure
concentrated at the point
$u^{(+)}(\varepsilon)$.
 Then,
by the property of the integral, we have
$$
I_{\alpha^{(+)}(\varepsilon)}\left(\Psi^*_{u^{(+)}(\varepsilon)}\right) =
\frac{A\left(\alpha^{(+)}(\varepsilon),
u^{(+)}(\varepsilon)\right)}{B\left(\alpha^{(+)}(\varepsilon),
u^{(+)}(\varepsilon)\right)}.
\eqno(37)
$$

 From relations~(29), (36), (37),
we obtain (27),
i.e., the first assertion of Theorem~2.
 The second assertion of this theorem
(relation~(28))
is proved in a similar way,
using the second assertion of Lemma~2.

 The proof of Theorem~2 is complete.
\end{proof}

\medskip

 The theoretical value of Theorem~2
consists in the fact that
if the test function of the linear-frac\-tional integral functional
is bounded, but does not attain its extremum, then,
for any given
$\varepsilon > 0$,
there exists an $\varepsilon$-optimal deterministic control strategy
determined by the values of
$\alpha^{(+)}(\varepsilon) \in S$,
$u^{(+)}(\varepsilon) \in U$
for the maximum problem or by the values of
$\alpha^{(-)}(\varepsilon) \in S, u^{(-)}(\varepsilon) \in U$
for the of minimum problem.

 Let us pass to the formulation and proof
of another statement related to the extremal problem
for a linear-fractional integral functional depending
on a parameter.

\begin{theorem}
 We assume that the main objects in the extremal problem~\rom{(3)}
satisfy the conditions~\rom{1,~3}
and the function
$B(\alpha, u)$
is of constant sign
(strictly
positive or
strictly
negative).
 We also assume that the test function
$C(\alpha, u)$
is not bounded
\rom(above or below\rom).
 Then the corresponding linear-fractional integral functional
is also not bounded above or below
and the following assertions hold:
\begin{enumerate}
\item
 There exists a sequence
$\left(\alpha^{(+)}_n, \Psi^{*(+)}_n\right)$, $\alpha^{(+)}_n \in S$, $\Psi^{*(+)}_n \in \Gamma^*$,
$n= 1, 2, ... $,
such that
$$
I\left(\alpha^{(+)}_n, \Psi^{*(+)}_n\right) \rightarrow \infty,\qq
n \rightarrow \infty,
\eqno(38)
$$
if the test function is not bounded above.

\item
 There exists a
a sequence
$\left(\alpha^{(-)}_n,\Psi^{*(-)}_n\right)$,
$\alpha^{(-)}_n \in S$, $\Psi^{*(-)}_n \in \Gamma^*$, $n= 1, 2, ...$,
such that
$$
I\left(\alpha^{(-)}_n, \Psi^{*(-)}_n\right) \rightarrow - \infty,\qq
n\rightarrow \infty,
\eqno(39)
$$
if the test function is not bounded below.

 If the test function
$C(\alpha, u)$
is bounded above
and does not attain its global maximum, then,
for any given
$\varepsilon > 0$,
there exists a pair
$\alpha^{(+)}(\varepsilon) \in S$,  $u^{(+)} (\varepsilon) \in U$
such that the inequality holds.
\end{enumerate}
\end{theorem}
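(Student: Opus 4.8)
The plan is to reduce everything to the behaviour of the test function on degenerate measures, exactly as in the proofs of Theorem~1 and Lemma~2, and then to invoke the definition of an unbounded set directly. The single identity carrying the whole argument is the evaluation of the functional on a degenerate measure: for any fixed point $\hat{u} \in U$ and any $\alpha \in S$, the property of the integral gives
$$
I_\alpha\bigl(\Psi^*_{\hat{u}}\bigr) = \frac{A(\alpha, \hat{u})}{B(\alpha, \hat{u})} = C(\alpha, \hat{u}),
$$
which is relation~(33) already established in the proof of Lemma~2, and which is legitimate precisely because $\Gamma^* \subset \Gamma$ by Condition~3 and $B(\alpha, u)$ is of constant sign.

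First I would treat the case in which $C(\alpha, u)$ is not bounded above. By the definition of a set unbounded above, for every natural number $n$ there exists a pair $\bigl(\alpha^{(+)}_n, u^{(+)}_n\bigr) \in S \times U$ with
$$
C\bigl(\alpha^{(+)}_n, u^{(+)}_n\bigr) = \frac{A\bigl(\alpha^{(+)}_n, u^{(+)}_n\bigr)}{B\bigl(\alpha^{(+)}_n, u^{(+)}_n\bigr)} > n.
$$
I would then set $\Psi^{*(+)}_n = \Psi^*_{u^{(+)}_n} \in \Gamma^* \subset \Gamma$, the degenerate measure concentrated at $u^{(+)}_n$. Applying the evaluation identity above to this measure yields $I\bigl(\alpha^{(+)}_n, \Psi^{*(+)}_n\bigr) = C\bigl(\alpha^{(+)}_n, u^{(+)}_n\bigr) > n$, so that $I\bigl(\alpha^{(+)}_n, \Psi^{*(+)}_n\bigr) \to \infty$ as $n \to \infty$. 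This simultaneously proves relation~(38) and shows that $I_\alpha(\Psi)$ is not bounded above on $S \times \Gamma$, since its values exceed every natural number.

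The case in which $C(\alpha, u)$ is not bounded below is entirely symmetric: for each $n$ I would choose a pair $\bigl(\alpha^{(-)}_n, u^{(-)}_n\bigr)$ with $C\bigl(\alpha^{(-)}_n, u^{(-)}_n\bigr) < -n$, pass to the degenerate measure $\Psi^{*(-)}_n = \Psi^*_{u^{(-)}_n}$, and use the same identity to obtain $I\bigl(\alpha^{(-)}_n, \Psi^{*(-)}_n\bigr) < -n \to -\infty$, which gives relation~(39). I do not expect any genuine obstacle here: the argument is a direct transcription of the definition of unboundedness combined with the reduction to degenerate measures, and it mirrors Lemma~2 with the rôle of the supremum replaced by divergence. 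The only points requiring care are the explicit appeal to Condition~3, which guarantees that each $\Psi^{*(\pm)}_n$ is admissible (lies in $\Gamma$), and the remark that the constant-sign hypothesis on $B(\alpha, u)$ is exactly what keeps the evaluation identity and the functional itself well defined at every pair.
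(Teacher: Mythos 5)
Your proposal is correct and follows essentially the same route as the paper's own proof: choose a sequence of pairs along which the test function diverges, pass to the degenerate measures concentrated at the corresponding points, and use the evaluation identity $I_\alpha(\Psi^*_{\hat{u}}) = C(\alpha,\hat{u})$ to transfer the divergence to the functional. Your version is only slightly more explicit in extracting the sequence (via $C > n$) from the definition of unboundedness.
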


\begin{proof} [\bf Proof]
 Suppose that the test function
$C(\alpha, u) ={A(\alpha, u)}/{B(\alpha, u)}$
is not bounded above.
 Then
there exists a sequence of pairs of points
$\left(\alpha^{(+)}_n, u^{(+)}_n\right)$, $\alpha^{(+)}_n \in S$,
$u^{(+)}_n \in U$, $n = 1, 2, ...$,
for which
$C\left(\alpha^{(+)}_n, u^{(+)}_n\right) \rightarrow \infty$,
$n \rightarrow \infty$.
 Consider the sequence of degenerate probability measures concentrated
at the points
$u^{(+)}_n: \Psi^{*(+)}_n = \Psi^*_{u^{(+)}_n}, n = 1, 2, ...$.
 By the property of the integral with respect to a degenerate measure,
we have
$$
I\left(\alpha^{(+)}_n, \Psi^{*(+)}_n\right) = C\left(\alpha^{(+)}_n,
u^{(+)}_n\right) \rightarrow \infty, n \rightarrow \infty,
$$
and so relation~(38) is proved.

 Relation~(39) can be established in a similar way.

 The proof of Theorem~3 is complete.
\end{proof}

 By Theorem~3, if the test function of the linear-fractional integral functional
is not bounded above or below,
then the solution of the corresponding unconditional extremum (maximum or minimum) problem
does not exist.

\section{Conclusions}

 In this paper, we have proved three theorems whose assertions
determine the solution of the unconditional extremal problem
for a linear-fractional integral functional depending
on a parameter.
 It is established that the solutions of the extremal problem
are completely defined by the properties of the test function.
 The results obtained
generalize statements from [1],~[2] on the unconditional extremum of a fractional linear integral functional
to the case where the test function is independent of the optimization parameter.
 The given results can be used to solve various applied stochastic control problems
in which the analytic structure of the objective functional
is described by a fractional linear integral functional.

\end{document}